\documentclass[12pt, А4]{article}

\usepackage{amssymb, amsfonts, amsmath}

\usepackage{amsthm}

\usepackage{mathrsfs}

\newtheorem{theorem}{{\bf{ Theorem}}}
\newtheorem{lemma}{{\bf{ Lemma}}}

\newtheorem{corollary}{\bf{ Corrolary}}

\newtheorem{definition}{\bf{ Definition}}
\setcounter{section}{0}

\headsep=0.5cm \topmargin=0.2mm \oddsidemargin=15mm \textwidth=15cm
\textheight=20cm

\frenchspacing  \sloppy
\setlength{\parindent}{0.3in}

\begin{document}

\title{The Hopfian Property of $n$-Periodic Products of Groups}

\author{{S.~I.}~{Adian}\,\,V.S.Atabekyan}

\date{}
%\date{\vspace{-5ex}}

\maketitle

\begin{abstract}
Let $H$ be a subgroup of a group $G$. A normal subgroup $N_H$
of $H$ is said to be \textit{inheritably normal} if there is a
normal subgroup $N_G$ of $G$ such that $N_H=N_G\cap H$.
It is proved in the paper that a subgroup $N_{G_i}$ of a factor $G_i$ of
the $n$-periodic product
$\prod_{i\in I}^nG_i$  with nontrivial factors $G_i$ is an
inheritably normal subgroup if and only if $N_{G_i}$ contains the subgroup
$G_i^n$. It is also proved that for odd $n\ge 665$ every nontrivial normal
subgroup in a given $n$-periodic product $G=\prod_{i\in I}^nG_i$
contains the subgroup $G^n$. It follows that almost all $n$-periodic
products $G=G_1\overset{n}{\ast}G_2$ are Hopfian, i.e., they are not
isomorphic to any of their proper quotient groups. This allows one to construct
nonsimple and not residually finite Hopfian groups of bounded exponents.
\end{abstract}

\maketitle

\paragraph{{Introduction.}}
\label{subsec1:v483}
The notion of periodic product of period $n$ (or $n$-periodic product) for
a given family of groups $ \{G_i \}_{i \in I} $
(denoted by $ \prod_{i \in I}^n  G_i $), was introduced in 1976
by the first author of this paper in~\cite{1:v483}.
It led to a solution of the well-known problem by Maltsev on the
existence of a product operation of groups different from the classical
operations of direct or free products of groups and possessing of all of
the natural properties of these operations, including the so-called hereditary
property for subgroups. The last property was named \textit{Maltsev's
postulate} in connection with this Maltsev problem.

 The periodic product of given period $n$ (or $n$-periodic product) of a given
family of groups $\{G_i\}_{i\in I}$ is defined for any odd $n\ge 665$ on
the basis of the Novikov-Adian theory thoroughly explained in the
monograph~\cite{3:v483} (see also~\cite{2:v483}). This group
$\prod_{i\in I}^nG_i$ is defined in the class of all groups.
 It is the quotient group of the free product  $\{G_i\}_{i\in I}$ of the given
family of groups by a specially chosen system of defining relations of the
form $A^n = 1$. These product operations possess the main properties
of the classical direct and free product of groups.
They are exact and associative and have
the hereditary property for subgroups. The last property means that any
subgroups $H_i$ in the factors $G_i$ of the $n$-periodic product
$F=\prod_{i\in I}^nG_i$ of the family of groups $\{G_i\}_{i\in I}$
generate their own $n$-periodic product in the group
$\prod_{i\in I}^nG_i$, the identical
embeddings $H_i \to G_i$ can be extended
to an embedding of the $n$-periodic product
$\prod_{i\in I}^nH_i$ of the family of subgroups
$\{H_i\}_{i\in I}$ in the $n$-periodic product $\prod_{i\in
I}^nG_i$, i.e., the subgroups of the factors $\{G_i\}_{i\in I}$
generate in~$\prod_{i\in I}^nG_i$ its own $n$-periodic product.

The construction of the $ n $-periodic product of odd period $n$ introduced
in~\cite{1:v483} by Adian has also the following important property of
so-called \textit{conditional periodicity}, which can be regarded as a
natural analog of the commutation of elements from different factors
in the direct products of groups: \textit{If the original groups $ G_i $ do
not contain involutions, then the new operation of groups $ \prod
_{i \in I}^nG_i $ can be constructed in such a way that, in them,
the equality $x^n = 1$ holds for any $ x $ which is not conjugating to
elements of the original factors }. This property allowed to prove in
1978 (see~\cite{5:v483} and~\cite{2:v483}) the following interesting
simplicity criterion of $n$-periodic products of groups without
involutions.

\begin{theorem}
\label{th1:v483}
An $n$-periodic product of odd period $ n \ge 665 $ of a given family of
groups without involutions $ \{G_i \}_{i \in I} $  is a simple group  if
and only if for each factor $ G_i $ the equality $G_i^n = G_i$ holds.
\end{theorem}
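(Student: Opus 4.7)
I would prove the two directions of the equivalence separately, using the universal property of the $n$-periodic product for the necessity and the paper's structural results on normal subgroups (announced in the abstract) for the sufficiency.

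\emph{Necessity.} Suppose $G_{i_0}^n\ne G_{i_0}$ for some $i_0\in I$. To exhibit a proper nontrivial normal subgroup of $F=\prod_{i\in I}^nG_i$ I would construct a surjection $\varphi\colon F\to G_{i_0}/G_{i_0}^n$. Define $\varphi$ first on the free product $\ast_{i\in I}G_i$ by the canonical projection $G_{i_0}\to G_{i_0}/G_{i_0}^n$ on the distinguished factor and by the trivial map on every other $G_j$. Each defining relator of $F$ has the form $A^n$; its $\varphi$-image is an $n$th power in the quotient $G_{i_0}/G_{i_0}^n$, hence trivial, so $\varphi$ descends to $F$. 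By hypothesis the image is nontrivial, so $\ker\varphi$ is proper; and it already contains every other nontrivial factor, so it is itself nontrivial. Thus $F$ is not simple.

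\emph{Sufficiency.} Assume $G_i^n=G_i$ for every $i\in I$. Each $G_i$ is then generated by $n$th powers of elements of $F$, and since the factors generate $F$ we obtain $F^n=F$. Invoking the second result announced in the abstract, namely that every nontrivial normal subgroup of $F$ contains the subgroup $F^n$, any such subgroup contains $F^n=F$ and hence equals $F$. Therefore $F$ is simple.

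\emph{Main obstacle.} The sufficiency direction has been reduced to the structural theorem on normal subgroups, so the real work lies in proving that theorem. The principal difficulty I anticipate is the treatment of a nontrivial $x$ in a normal subgroup $N\triangleleft F$ that is not $F$-conjugate to any factor element: by conditional periodicity such an $x$ must satisfy $x^n=1$, and one must use the Novikov--Adian combinatorial classification~\cite{3:v483} of elements of order $n$ to produce, through conjugation and commutation with carefully chosen factor elements, a nontrivial element of $N$ that is conjugate to an element of some $G_{i_0}$. This reduction is exactly where the hypotheses $n\ge 665$ odd and the absence of involutions become essential --- both for the validity of conditional periodicity itself and for the combinatorial length-estimates on periodic words.
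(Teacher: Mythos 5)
Your proposal is essentially correct, but note that the paper itself gives no proof of Theorem~\ref{th1:v483}: it is quoted from the earlier works \cite{5:v483} and \cite{2:v483}, and the closest the present paper comes is Theorem~\ref{th3:v483}, which it describes as a generalization of Theorem~\ref{th1:v483} obtained by modifying the original 1978 argument. Your route therefore inverts the paper's logical order: you derive the simplicity criterion as a corollary of the normal-subgroup theorem, whereas historically the simplicity criterion came first and the normal-subgroup theorem is its refinement. This inversion is not circular within the paper, since the proof of Theorem~\ref{th3:v483} relies only on Theorem~\ref{th2:v483} and the Novikov--Adian machinery, never on Theorem~\ref{th1:v483}; what your reduction buys is a two-line sufficiency argument ($G_i^n=G_i$ for all $i$ forces $F^n=F$, so every nontrivial normal subgroup contains $F$), at the cost of deferring all of the combinatorial content --- the classification of conjugacy representatives, the kernel/rank analysis, the role of conditional periodicity --- to Theorem~\ref{th3:v483}, exactly as you acknowledge. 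Your necessity direction (projecting onto $G_{i_0}/G_{i_0}^n$ and killing the other factors, with the relators $A^n$ dying because that quotient satisfies $x^n=1$ identically) is the standard argument and is correct. One small caveat: the claim that $\ker\varphi$ is nontrivial because it contains the other factors requires that at least two of the $G_i$ be nontrivial (by exactness of the product the factors embed in $F$); for a single factor the statement of the theorem is actually false (e.g., $F=\mathbf{Z}_n$ with $n$ prime is simple although $F^n=1\neq F$), so the implicit standing hypothesis of a genuinely nontrivial product is needed and should be stated.
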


This criterion of simplicity allowed to construct in~\cite{5:v483} a new
series of finitely generated infinite simple groups in  \text{varieties} of
periodic groups of odd composite periods $nk$ for $n \ge 665$ and $k>1$.
Thus, a positive answer to the following question von H.~Neumann's monograph
was given: \textit{Is it possible for a variety, different from the variety
of all groups, to contain infinitely many nonisomorphic non-Abelian simple
groups}?

The present work can be regarded as a continuation of
the papers~\cite{1:v483},~\cite{5:v483}, and~\cite{2:v483}. Here we investigate
some properties of normal subgroups of $ n $-periodic products of groups.
For instance, we consider the interesting property of the \emph{extendability
of a congruence, presented on a given
subgroup~$G_i$, to some congruence on the group~$G$}.
This means that the quotient group of a subgroup $G_i $ of $ G $ is
naturally embedded into quotient group of the group $ G $.

\begin{definition} A normal subgroup $ N_H $ of a subgroup $ H $ of $ G $
is said to be \textit{inheritably normal} if there exists a normal subgroup
$N_G$ of $G$ such that $H\cap N_G = N_H. $
\end{definition}

If \textit{any} normal subgroup of  a given subgroup $ H $ of $ G $ is
inheritably normal, then the subgroup $ H $ is called \textit{inheritably
factorizable}. The concept of an inheritably factorizable subgroup was
introduced by B.~Neumann in~\cite{6:v483}, where such subgroups were called
\textit{$ E $-sub\-groups}.

In the paper~\cite{7:v483}, inheritably normal free subgroups of infinite
rank in the free group of rank 2 were constructed. It was proved
in~\cite{8:v483} that every noncyclic subgroup of the free Burnside group
$ \textbf{B}(m, n) $ of odd period $ n \ge1003 $ contains an inheritably
factorizable subgroup isomorphic to the free Burnside group of infinite
rank $ \textbf{B}(\infty, n) $.

For the $n$-periodic product of two given factors $G_1$ and $G_2$, we
use the notation $G_1\overset{n}{\ast}G_2$.

In this paper, we prove a necessary and sufficient condition for a given
normal subgroup of a factor $G_i$ of the nontrivial $n$-periodic product
$\prod_{i\in I}^nG_i$ of an arbitrary family of groups $\{G_i\}_{i\in
I}$ to be an inheritably normal subgroup.

It is also proved that any nontrivial normal subgroup of an $ n $-periodic
product $ G = \prod_{i \in I}^n G_i $ contains the subgroup $ G^n
$. It follows that if at least in one of the factors of the $ n
$-periodic product
$ G = \prod_{i \in I}^n G_i $ the identity $ x^n = 1 $ does not
hold, then $ G $ is a Hopfian group, i.e., it is not isomorphic to any of
its proper quotient groups. This allows us to construct nonsimple and not
residually finite Hopfian groups of bounded period.

\paragraph{{Description of inheritably normal subgroups in
factors of periodic products.}}
\label{subsec2:v483}
Recall that the $n$-periodic product $G_1\overset{n}{\ast}G_2$
of two given nontrivial factors is obtained from the free product~$G_1\ast
G_2$ by adding defining relations of the form~$A^n=1$ for all elementary
periods~$A \in (G_1 \ast G_2)$ for all ranks~$\alpha\geqslant 1$. In
particular, any cyclicly reduced in~$G_1\ast G_2$ word  $A$ of length
$|A|>1$, which is not the product of two involutions and does not contain a
9-power of shorter words, is an elementary period of rank 1.

We need the following simple lemma.
\begin{lemma}
\label{l1:v483}
If in the free product $G_1\ast G_2$ a given cyclically reduced word of a
length $ \ge 2 $ is the product of two involutions, then some cyclic shift of
this product has the normal form $c_1zc_2z^{-1}$, where $c_1$ and $c_2$ are
involutions in $G_1$ or $G_2$.
\end{lemma}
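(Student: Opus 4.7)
The plan is to exploit the fact that a product of two involutions is always conjugate to its own inverse, then use the conjugacy theorem for free products to extract an explicit palindromic symmetry of the cyclically reduced word, and finally read off the prescribed form.

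Write $W = uv$ with $u^2 = v^2 = 1$. Since $u = u^{-1}$, one computes $uWu^{-1} = u(uv)u = vu = (uv)^{-1} = W^{-1}$, so $W$ is conjugate to $W^{-1}$ in $G_1 \ast G_2$. Both are cyclically reduced of the same length $m \ge 2$, so the conjugacy theorem for free products forces $W^{-1}$ to be obtained from $W$ by a cyclic permutation of its syllables. Writing $W = x_1 \cdots x_m$ in normal form, this produces an integer $p$ yielding the palindromic identity $x_j = x_{p+1-j}^{-1}$ for all $j$ modulo $m$.

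I next extract two central involutions by a parity analysis. If $p$ were even, plugging $j = p/2$ into the palindromic identity would give $x_{p/2} = x_{p/2+1}^{-1}$, placing two consecutive syllables in the same factor and contradicting cyclic reducedness. Hence $p$ is odd; setting $q = (p+1)/2$ makes the relation at $j = q$ read $x_q = x_q^{-1}$, so $x_q$ is an involution $c_1$ in some factor $G_{i_1}$. An entirely analogous argument rules out $m$ being odd: writing $m = 2k+1$ and using the congruence $q - k - 1 \equiv q + k \pmod m$ yields $x_{q+k+1} = x_{q+k}^{-1}$, again impossible. Thus $m = 2k$ is even, and evaluating the palindromic identity at $j = q+k$ gives $x_{q+k} = x_{q+k}^{-1}$, so $c_2 := x_{q+k}$ is a second involution in some factor.

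Finally, for $i = 1, \ldots, k-1$ the same identity gives $x_{q+k+i} = x_{q+k-i}^{-1}$, so the cyclic shift of $W$ starting at position $q$ reads
$$x_q \cdot x_{q+1} \cdots x_{q+k-1} \cdot x_{q+k} \cdot x_{q+k-1}^{-1} \cdots x_{q+1}^{-1} = c_1 \, z \, c_2 \, z^{-1},$$
with $z = x_{q+1} \cdots x_{q+k-1}$. The alternation of factors inherited from the cyclically reduced $W$ makes this expression already a normal form. The principal obstacle is the parity bookkeeping in the previous paragraph; without ruling out both the ``$p$ even'' and the ``$m$ odd'' cases one cannot position a single-syllable involution at each of the two required antipodal points, and hence cannot read off the prescribed structure.
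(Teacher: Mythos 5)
Your proof is correct, but it takes a genuinely different route from the paper's. The paper argues structurally from the involutions themselves: since every element of finite order in $G_1\ast G_2$ is conjugate into a factor, one writes $u=Pc_1P^{-1}$ and $v=Qc_2Q^{-1}$ with $c_1,c_2$ involutions in the factors, so that $uv$ is conjugate to $c_1(P^{-1}Q)c_2(P^{-1}Q)^{-1}$; the cancellations needed to make this irreducible preserve the shape $c_1zc_2z^{-1}$, and the conjugacy theorem then identifies the result with a cyclic shift of the given cyclically reduced word. You instead use only the single identity $uWu^{-1}=W^{-1}$, invoke the conjugacy theorem to get the palindromic relation $x_j=x_{p+1-j}^{-1}$, and recover the two involutions combinatorially at the two antipodal fixed points of that symmetry. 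Your index and parity bookkeeping checks out: consecutive syllables of a cyclically reduced word lie in different factors, which kills both the ``$p$ even'' and the ``$m$ odd'' cases exactly as you say, and the identity $x_{q+k+i}=x_{q+k-i}^{-1}$ assembles the shift into $c_1zc_2z^{-1}$ with the alternation of factors guaranteeing it is a normal form. The trade-off: the paper's argument is shorter because it imports the torsion theorem for free products, while yours is more self-contained (only the conjugacy theorem is used) and actually establishes the formally stronger fact that any cyclically reduced word of length $\ge 2$ that is conjugate to its own inverse has a cyclic shift of the form $c_1zc_2z^{-1}$ --- i.e., reversibility already implies being a product of two involutions here. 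One small point worth making explicit: when you fix the integer $p$, its parity is only canonical once $m$ is known to be even, so the cleanest ordering is to observe that if $m$ were odd then some representative of $p$ modulo $m$ would be even and your first contradiction would apply; as written, your two separate contradictions still cover all cases.
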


\begin{proof}
 It is well known that every element of finite order of the free product
$G_1\ast G_2$ is conjugate to an element of~$G_1$ or of~$G_2$. Let
$$c=(a_1...a_t)c_1(a_1...a_t)^{-1} \qquad\text{and}\qquad
d=(h_1...h_s)c_2(h_1...h_s)^{-1}$$
be the normal forms of two involutions of $G_1\ast G_2$, where $c_1$ and
$c_2$ are some involutions from the factors~$G_1$ or~$G_2$.
If the cyclically reduced form of the element $cd$ is of length $\ge 2$,
then it is conjugate to either
$c_1c_2$ or an element of the form
$$c_1z_1z_2...z_kc_2z_k^{-1}...z_2^{-1}z_1^{-1},$$ where
$z_1z_2...z_k$ is the normal form of
$(a_1...a_t)^{-1}h_1...h_s$.

The element $c_1z_1z_2...z_kc_2z_k^{-1}...z_2^{-1}z_1^{-1}$ can be assumed
to be irreducible. For instance, if $c_1$ and $z_1$ belong to the same
group $G_j$, then, using the notation $c_1'=z_1^{-1}c_1z_1$,
we can replace the
element $c_1z_1z_2...z_kc_2z_k^{-1}...z_2^{-1}z_1^{-1}$ by
$c_1'z_2...z_kc_2z_k^{-1}...z_2^{-1}$.
 Lemma~\ref{l1:v483} is proved.
%HINTS: Delete the \qed if not needed.
%If the proof ends with a formula,
%move the \qed to the end of the formula.
%\renewcommand{\qed}{}
\end{proof}

\begin{theorem}
\label{th2:v483}
A nontrivial normal subgroup $N_{G_i}$ of the factor $G_i$ of
a nontrivial $n$-periodic product $G=\prod_{i\in I}^nG_i$ is an
inheritably normal subgroup of $G_i$ if and only if $N_{G_i}$ contains all
the $n$th powers of elements of $G_i$, i.e., the following inclusion
$G_i^n\subset N_{G_i}$ holds.
\end{theorem}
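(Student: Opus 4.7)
The result is an equivalence, so I would prove each direction separately, in both cases exploiting Lemma~\ref{l1:v483} as a criterion for deciding which short cyclically reduced words in the free product are elementary periods of rank~$1$ (and hence have $n$th power equal to~$1$ in~$G$).

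For the sufficiency direction ($G_i^n\subset N_{G_i}\Rightarrow$ inheritably normal) I would construct $N_G$ as the kernel of a natural projection. Set $\tilde G_i:=G_i/N_{G_i}$ and form the $n$-periodic product $G':=\tilde G_i\overset{n}{\ast}\prod_{j\in I\setminus\{i\}}^n G_j$ (well defined by the associativity of the operation). The quotient map $G_i\to\tilde G_i$, combined with the identity on the remaining factors, induces by the functoriality of the $n$-periodic product a homomorphism $\pi:G\to G'$. Put $N_G:=\ker\pi$. Then $N_G\cap G_i$ equals $\ker(G_i\to G')$, and since the factor $\tilde G_i$ embeds into $G'$ by the embedding-of-factors property of periodic products, this kernel coincides with $\ker(G_i\to\tilde G_i)=N_{G_i}$, as required.

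For the necessity direction, assume $N_G\triangleleft G$ satisfies $N_G\cap G_i=N_{G_i}\neq\{1\}$. Fix $g\in G_i$, a nontrivial $h\in N_{G_i}$, and $x\in G_j\setminus\{1\}$ for some $j\neq i$. The plan is to produce a word $w\in G$ with $w\equiv g\pmod{N_G}$ and $w^n=1$ in $G$; then $g^n\equiv w^n=1\pmod{N_G}$ together with $g^n\in G_i$ gives $g^n\in N_G\cap G_i=N_{G_i}$. The first candidate is $w:=g\cdot xhx^{-1}$, which automatically satisfies $w\equiv g\pmod{N_G}$ by normality. Its normal form $gxhx^{-1}$ in the free product is a length-$4$ cyclically reduced word, too short to contain a $9$th power of a shorter word, and by Lemma~\ref{l1:v483} it fails to be an elementary period of rank~$1$ only in the single symmetric pattern $c_1zc_2z^{-1}$ with $c_1=g$ and $c_2=h$, that is, precisely when both $g$ and $h$ are involutions. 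In every other case, $w^n=1$ holds and the conclusion follows.

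The main obstacle is therefore the residual case in which $g$ and every nontrivial element of $N_{G_i}$ are involutions (so $N_{G_i}$ is elementary abelian of exponent~$2$). Here I would lengthen the correction: using two conjugates of $h$, form
\[
 w\;=\;g\cdot(xhx^{-1})\cdot(yhy^{-1}),
\]
which still satisfies $w\equiv g\pmod{N_G}$. Choosing $x,y$ either as two distinct non-involutions in a single $G_j$ (yielding a cyclically reduced length-$6$ word) or, when no such $G_j$ is available, as non-identity elements of two different factors $G_j,G_{j'}$ (yielding a cyclically reduced length-$7$ word, whose odd length automatically excludes the pattern $c_1zc_2z^{-1}$), a direct inspection of each cyclic shift against Lemma~\ref{l1:v483} rules out the product-of-two-involutions form. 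Thus $w$ becomes an elementary period of rank~$1$ and $w^n=1$. In the most degenerate subcase — when the auxiliary factors consist essentially of involutions only — the argument bootstraps: a length-$6$ elementary period $g_1\tau g_2\tau g_3\tau$ built from three distinct nontrivial involutions of $G_i$ first forces the involutions $\tau$ of the other factors into $N_G$, after which a suitable length-$8$ elementary period collapses, modulo the enlarged $N_G$, to $g$ itself. The technical heart of the proof is this careful escape from the ``product of two involutions'' symmetry that Lemma~\ref{l1:v483} singles out as the sole obstruction to being an elementary period of rank~$1$.
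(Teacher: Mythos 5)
Your necessity argument is sound in outline and follows essentially the same route as the paper: replace $g$ by a word $w\equiv g\pmod{N_G}$ obtained by inserting conjugates of elements of $N_{G_i}$, check via Lemma~\ref{l1:v483} that $w$ is an elementary period of rank~$1$, and conclude $g^n\in N_G\cap G_i$. (Your handling of the degenerate involution cases is sketchier than the paper's — in the worst case $|N_{G_i}|=2$ with the other factor equal to $\mathbf{Z}_2$ you cannot pick ``three distinct nontrivial involutions'' inside $N_{G_i}$, and conjugates of involutions of $G_i\setminus N_{G_i}$ do not lie in $N_G$, so the ``bootstrap'' as stated does not go through; the paper instead uses the explicit words $babg$ and $babagbgbg$ there — but this is repairable in the same spirit.)

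The sufficiency direction, however, has a genuine gap: the ``functoriality'' of the $n$-periodic product with respect to surjections of the factors, on which your whole construction of $N_G=\ker\pi$ rests, is not a property of the construction and is in fact false. Note first that your argument never uses the hypothesis $G_i^n\subset N_{G_i}$; if it were correct it would show that \emph{every} nontrivial normal subgroup of a factor is inheritably normal, contradicting the necessity direction you just proved. Concretely, the induced map $\pi\colon G\to G'=\tilde G_1\overset{n}{\ast}G_2$ need not exist even under the hypothesis: take $G_2=\langle t\rangle$ infinite cyclic and two distinct nontrivial $a_1,a_2\in N_{G_1}$. The word $a_1ta_2t$ is cyclically reduced of length~$4$, contains no $9$-power, and no cyclic shift has the form $c_1zc_2z^{-1}$ of Lemma~\ref{l1:v483} (that would force $t^2=1$ or $t$ to be an involution), so it is an elementary period of rank~$1$ and $(a_1ta_2t)^n=1$ in $G$. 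Its image under your quotient map is $t^2$, so well-definedness of $\pi$ would require $t^{2n}=1$ in $G'$; but $G_2$ embeds in $G'$ by exactness, so $t^{2n}\neq 1$. The relations $A^n=1$ of $G$ simply do not map to relations of $G'$. The paper avoids this by choosing a much cruder normal subgroup: $N=N_{G_1}N_2$, where $N_2$ is the normal closure of the \emph{entire} other factor $G_2$, so that $G/N\cong G_1/N_{G_1}$; the whole content of the proof is then the computation $G_1\cap N_2=N_1\subset G_1^n$, obtained by deleting $G_2$-letters from the defining relators $A^n$, and this is precisely where the hypothesis $G_1^n\subset N_{G_1}$ is used to get $G_1\cap N_{G_1}N_2=N_{G_1}(G_1\cap N_2)=N_{G_1}$. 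You need some argument of this kind; the quotient-of-one-factor target group does not work.
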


\begin{proof} Obviously, it is sufficient to consider only the case of
an $n$-periodic product  of two nontrivial groups
$G_1\overset{n}{\ast}G_2$ and to prove our statement for the factor $G_1$.

Suppose that $N_{G_1}$ is a an inheritably normal subgroup of the
factor  $G_1$, that is $N_{G_1}$ is a nontrivial normal subgroup of
$G_1$ and $N$ is a normal subgroup of $G$
such that the equality $N_{G_1}=N\cap G_1$ holds. It is sufficient to
prove $g^n\in N_{G_1}$ for an arbitrary nontrivial element $g\in G_1$.

If the order $|G_1|$ of the group $G_1$ equals either 2 or 3, then  $G_1$
is a simple group and $N_{G_1}=G_1$.

Hence we can assume that $|G_1|>3$.

Let $g$ be an arbitrary nontrivial element of $G_1$.

We consider separately the following two cases:

1. $|G_2|\ge3$ \, and \, 2. $|G_2|=2$.

\medskip
\textsl{Case 1. $|G_2|\ge 3$.}

We choose nontrivial elements $a\in N_{G_1}$  and $b_1,b_2 \in G_2$, where
$b_1\neq b_2 $.

Consider the element $b_1^{-1}ab_1b_2^{-1}ab_2g$. In the free product
$G_1\ast G_2$, it has the cyclically irreducible normal form
$b_1^{-1}a(b_1b_2^{-1})ab_2g$ of length 6 in the free product $G_1\ast
G_2$. Using Lemma~\ref{l1:v483},  we can check that it is not equal to the
product of two involutions in $G_1\ast G_2$. By the definition of the $n$-periodic
product, the word $b_1^{-1}a(b_1b_2^{-1})ab_2g$ is an elementary period of
rank 1 and,  therefore, in the group $G=\prod_{i\in I}^nG_i$, the
defining relation $(b_1^{-1}a(b_1b_2^{-1})ab_2g)^n = 1$ holds.

It follows from the condition $a\in N_{G_1}$  and
$N_{G_1}\subset N$ that
$$(b_1^{-1}ab_1b_2^{-1}ab_2g)^n\equiv g^n \,(\operatorname{mod}N).$$
 As was mentioned above,
the relation
$(b_1^{-1}ab_1b_2^{-1}ab_2g)^n=1$ holds in $ G $. Consequently, we have $
g^n\equiv 1 \,(\operatorname{mod}N) $. This means that $g^n\in N $. Hence
$g^n\in N_{G_1}$, because $g\in G_1 $.

Thus, the condition $ g \in G_1 $ implies $g ^ n \in N_{G_1} $. Therefore,
the necessity of the condition $G_1 ^n \subset N_ {G_1} $ is proved in Case~1.

\medskip
\textsl{Case 2. $|G_2|=2$.} In this case, $G_2$ is generated by some
involution $b$.

Assume $|N_{G_1}|\ge3$. Then there exist nontrivial elements
$a_1,a_2\in N_{G_1}$ such that $a_1\not=a_2$. Therefore, according
Lemma~\ref{l1:v483}, the element $ba_1ba_1ba_2bg$ is not equal to the product
of two involutions in the free product $G_1\ast G_2$. Hence the word
$ba_1ba_1ba_2bg$ is an elementary period of rank 1 and the defining
relation $(ba_1ba_1ba_2b)^n=1$ holds in the group
$G=G_1\mathop\ast^{n}G_2$.

On the other hand, it follows from  $a_1,a_2\in N_{G_1}$ that
$(ba_1b)a_1(ba_2b)\in N_{G_1}$.  Since
$N_{G_1} \subset N$, we obtain $$(ba_1ba_1ba_2bg)^n\equiv g^n
\,(\operatorname{mod}N).$$ Consequently, the relation $g^n \in N\cap G_1 =
N_{G_1}$ holds.
\medskip

It remains to consider the subcase $|N_{G_1}|=2$ for Case~1. In this
subcase,  $N_{G_1}$ is generated by some involution $a$.

Reasoning as above, we can prove that if $g^2\neq 1$, then $babg$ is an
elementary period of rank 1. Hence $(babg)^n\equiv g^n
\,(\operatorname{mod}N)$ and we obtain $g^n\in N_{G_1}$.

If $g^2= 1$ in $G$, we consider the word $babagbgbg$, which is also an
elementary period of rank 1 and hence a defining relation $(babagbgbg)^n =
1$ holds in $G$.

Using the conditions $(bab),a\in N $ and the equations $g^2=1=b^2$ in
$G$, we obtain the relation $(babagbgbg)^n \equiv gbg^nbg
\,(\operatorname{mod}N) $. Then, using the defining relation $(babagbgbg)^n
= 1$, we conclude that $g^n\in N$.

Consequently, we obtain $g^n\in N\cap G_1 = N_{G_1}.$

The first part of Theorem~\ref{th2:v483} is proved.

\medskip
To prove the second part of Theorem~\ref{th2:v483}, we assume that
a nontrivial normal subgroup $N_{G_1}$ of the group $ G_1$ contains
$G_1^n$.

Let us show that $N_{G_1}$ is an inheritably normal subgroup of $G_1$.

Let $N_2$ be the normal closure of the subgroup  $G_2$ in the group
$G=G_1\overset{n}{\ast}G_2$. The quotient group
$G_1\overset{n}{\ast} G_2/N_2$ is obtained from the group
$G_1\overset{n}{\ast} G_2$ by adding new defining relations $g=1$ for
all $g\in G_2$.
Therefore, every defining relation $A^n=1$ can be replaced by a new
relation of the form $A_1^n=1$, where $A_1\in G_1$ is obtained from $A$ in
$A^n=1$ by deleting all letters of the group $G_2$. The  quotient
group $G_1\overset{n}{\ast} G_2/N$ is isomorphic to the quotient group
$G_1/N_1$ by a normal subgroup $N_1$ which is the normal closure of a set of
words of the form $A_1^n$ in the group $G_1$. This means that $N_1$ is
contained in $G_1^n$. Therefore, we have
$G_1\cap N_2 = N_1\subset G_1^n.$

According to our assumption, the relation $G_1^n\subset
N_{G_1}$ holds. Therefore, we have the equality
$G_1\cap N_{G_1}N_2=N_{G_1}(G_1\cap N_2)= N_{G_1}$. Since $N_{G_1}$ is a
normal subgroup
of  $G_1$ and $N_2$ is the normal closure of the group $G_2$, the product
$N=N_{G_1}N_2$
is a normal subgroup of $G_1\overset{n}{\ast} G_2$. Thus, from the
equality $G_1\cap N_{G_1}N_2= N_{G_1}$, it follows that $N_{G_1}$ is an
inheritably normal subgroup of $G_1$ in the $n$-periodic product
$G_1\overset{n}{\ast} G_2$.
Theorem~\ref{th2:v483} is proved.
%HINTS: Delete the \qed if not needed.
%If the proof ends with a formula,
%move the \qed to the end of the formula.
%\renewcommand{\qed}{}
\end{proof}

From the proof of Theorem~\ref{th2:v483}, we also have the following.

\begin{corollary} Let $G_1$ be an inheritably factorizable subgroup of the
$n$-periodic product
$G_1\overset{n}{\ast} \mathbf{Z}_2$, where $n\ge 665$ is odd. If $G_1$
contains some involution, then it is the unique involution of $G_1$ that
belongs to the center of $G_1$.
\end{corollary}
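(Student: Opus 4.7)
The plan is to combine Theorem~\ref{th2:v483} with an explicit elementary-period computation in $G=G_1\overset{n}{\ast}\mathbf{Z}_2$. By the criterion of Theorem~\ref{th2:v483}, the inheritable factorizability of $G_1$ translates into the statement that every nontrivial normal subgroup $N$ of $G_1$ contains $G_1^n$. Since $n$ is odd, any involution $a\in G_1$ satisfies $a=a^n\in G_1^n$, so $G_1^n$ is itself nontrivial, and being contained in every nontrivial normal subgroup of $G_1$ it is the minimum such subgroup; in particular $\langle a\rangle^{G_1}=G_1^n$.

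The key step is to rule out that $G_1$ has more than one involution. I would argue by contradiction. Suppose first that $G_1$ contains three pairwise distinct involutions $a_1,a_2,a_3$, and let $b$ be the involution generating $\mathbf{Z}_2$. Consider the word $W=a_1 b a_2 b a_3 b$ of length~$6$ in the free product $G_1\ast\mathbf{Z}_2$. Running through each of the six cyclic shifts of $W$ and comparing against the normal form $c_1 z c_2 z^{-1}$ from Lemma~\ref{l1:v483}, one checks that every potential identification forces two of the $a_i$ to coincide, contradicting distinctness; hence $W$ is not a product of two involutions. Since $W$ is cyclically reduced and too short to contain a $9$th power, it is an elementary period of rank~$1$, so $W^n=1$ in $G$. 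Setting $\tilde N=\langle a_1\rangle^G$, one has $a_i\in G_1^n=\langle a_1\rangle^{G_1}\subset\tilde N$ for each $i$, so $W\equiv b^3=b\pmod{\tilde N}$ and therefore $b=W^n\in\tilde N$. On the other hand, the functoriality of the $n$-periodic product (explicitly described in the proof of the second half of Theorem~\ref{th2:v483}) identifies $G/\tilde N$ with $(G_1/G_1^n)\overset{n}{\ast}\mathbf{Z}_2$, and by the hereditary property the $\mathbf{Z}_2$-factor embeds faithfully, so $b\notin\tilde N$ — a contradiction.

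The case of exactly two distinct involutions $a\ne a'$ in $G_1$ is handled analogously. If $[a,a']=1$, then $aa'$ is a third involution, reducing to the previous case. Otherwise $aa'$ has order at least $3$ and is not an involution, and the analogous cyclic-shift analysis shows that $W'=(aa')bab(aa')b$ is also an elementary period of rank~$1$ (each candidate identification now forces $(aa')^2=1$). The same reduction modulo $\tilde N=\langle a\rangle^G$ yields $W'\equiv b$, and hence $b\in\tilde N$, contradicting $b\notin\tilde N$. Therefore $G_1$ contains at most one involution; if $a$ is that involution, then for every $g\in G_1$ the conjugate $gag^{-1}$ is again an involution of $G_1$, so $gag^{-1}=a$ and $a\in Z(G_1)$.

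The main obstacle is the careful cyclic-shift case analysis confirming that $W$ and $W'$ are not products of two involutions (i.e., are genuine elementary periods of rank~$1$), and the invocation of the functoriality of the periodic product to guarantee that $b\notin\langle a\rangle^G$; the latter is the precise point at which the nontriviality of the $\mathbf{Z}_2$-factor in the quotient enters the argument and converts the relation $W^n=1$ into a contradiction.
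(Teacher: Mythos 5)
Your reduction via Theorem~\ref{th2:v483} is sound up to and including the point where you verify that $W=a_1ba_2ba_3b$ is an elementary period of rank~$1$ and deduce, from $W^n=1$ and $a_1,a_2,a_3\in\tilde N=\langle a_1\rangle^G$, that $b\in\tilde N$. The fatal step is the very last one: the claim that $b\notin\tilde N$ because ``functoriality'' identifies $G/\tilde N$ with $(G_1/G_1^n)\overset{n}{\ast}\mathbf{Z}_2$ and the $\mathbf{Z}_2$-factor embeds. No such functoriality exists and none is established in the paper. What the second half of the proof of Theorem~\ref{th2:v483} actually computes is the quotient of $G$ by $N_{G_1}N_2$, where $N_2$ is the normal closure of the \emph{entire} factor $G_2$ --- a quotient in which $b$ dies by construction --- and it says nothing about $G/\langle a_1\rangle^G$. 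Passing to quotients of a factor changes the set of elementary periods, so the periodic product does not commute with quotients the way it respects subgroups. In fact your own (correct) computation with $W$ proves that $b\in\langle a_1\rangle^G$ whenever $G_1$ has three distinct involutions: that inclusion is a theorem, not one horn of a contradiction. So the argument collapses exactly at the point where the hypothesis that $G_1$ is inheritably factorizable was supposed to do its work.

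The gap cannot be patched along these lines, because under the paper's literal definition of ``inheritably factorizable'' (every \emph{normal} subgroup of $G_1$ is inheritably normal) the conclusion can fail: take $G_1$ to be a finite simple group with more than one involution, say $A_5$. Its only nontrivial normal subgroup is $G_1$ itself, which contains $G_1^n$, so by Theorem~\ref{th2:v483} it is inheritably factorizable in $G_1\overset{n}{\ast}\mathbf{Z}_2$; yet it has many involutions, none central, and indeed $b\in\langle a_1\rangle^G$ there. What the proof of Theorem~\ref{th2:v483} really yields (the subcase $|N_{G_1}|=2$ of Case~2) is the following: if $G_1$ possesses an inheritably normal subgroup $\{1,a\}$ of order two --- equivalently, a central involution whose cyclic subgroup is inheritably normal --- then $G_1^n\subseteq\{1,a\}$, and since every involution $a'$ satisfies $a'=(a')^n\in G_1^n$, the involution is unique and central. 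Your proof would need to manufacture such an order-two normal subgroup from the mere existence of an involution, and that is precisely the step that is unavailable; the elementary-period machinery you set up, correct as far as it goes, only ever produces inclusions of the form $b\in\tilde N$, never the exclusion you need.
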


The following statement about inheritably factorizable subgroups of
$n$-periodic products of groups is a generalization of Theorem 1 from the
paper~\cite{9:v483} by the second author.

\begin{corollary} In any $n$-periodic product $G=G_1\mathop\ast
^{n}G_2$ of nontrivial components $G_1$ and
$G_2$ each factor $G_i$  is an inheritably factorizable subgroup in $G$
if and only if every nontrivial normal subgroup $N_{G_i}$ of $G_i$ contains
the subgroup $G_1^n$. \end{corollary}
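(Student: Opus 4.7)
The plan is to derive this corollary directly from Theorem~\ref{th2:v483}, using only the definition of an inheritably factorizable subgroup. By the symmetry between $G_1$ and $G_2$ in the construction of $G_1\overset{n}{\ast}G_2$, it suffices to argue the claim for the factor $G_1$.

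First, I would dispose of the two ``boundary'' normal subgroups of $G_1$ that are not covered by Theorem~\ref{th2:v483}. The trivial subgroup $\{1\}\triangleleft G_1$ is automatically inheritably normal in $G$: take $N_G=\{1\}$, so that $G_1\cap N_G=\{1\}$. Likewise, $G_1$ itself is inheritably normal via $N_G=G$, since $G_1\cap G=G_1$. Consequently, requiring that $G_1$ be inheritably factorizable amounts to requiring that every \emph{nontrivial} normal subgroup $N_{G_1}$ of $G_1$ be inheritably normal in $G$, which is precisely the setting handled by Theorem~\ref{th2:v483}.

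Second, I would invoke Theorem~\ref{th2:v483} in both directions. For the ``only if'' direction, suppose $G_1$ is inheritably factorizable; then every nontrivial normal subgroup $N_{G_1}$ of $G_1$ is inheritably normal in $G$, and Theorem~\ref{th2:v483} immediately gives $G_1^n\subset N_{G_1}$. For the ``if'' direction, suppose every nontrivial normal subgroup $N_{G_1}$ of $G_1$ contains $G_1^n$; then by Theorem~\ref{th2:v483} each such $N_{G_1}$ is inheritably normal in $G$, and combined with the boundary cases above this shows that \emph{every} normal subgroup of $G_1$ is inheritably normal, i.e., $G_1$ is inheritably factorizable. Applying the same argument to $G_2$ completes the proof.

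Since Theorem~\ref{th2:v483} already carries out the substantive work --- the construction of suitable elementary periods of rank~$1$ for necessity, and the quotient-group analysis $G_1\cap N_{G_1}N_2=N_{G_1}$ for sufficiency --- there is essentially no obstacle in the corollary itself; the only point requiring any care is to notice that Theorem~\ref{th2:v483} is stated only for \emph{nontrivial} $N_{G_1}$, so one must separately check that $\{1\}$ and $G_1$ are inheritably normal in $G$ before appealing to it.
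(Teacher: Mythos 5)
Your proposal is correct and follows essentially the same route as the paper, which states this corollary as an immediate consequence of Theorem~\ref{th2:v483} without a separate proof; your explicit handling of the boundary subgroups $\{1\}$ and $G_i$ (reading the ``$G_1^n$'' in the statement as $G_i^n$ for the respective factor) is exactly the small amount of bookkeeping needed to pass from ``every nontrivial normal subgroup is inheritably normal'' to ``inheritably factorizable.''
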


\paragraph{{Normal subgroups of $n$-periodic products.}}

A slight modification of the proof given in~\cite{1:v483}
of Theorem~\ref{th1:v483} on the simplicity criterion of $n$-periodic
products of odd exponents $n \ge 665$ for groups without involutions allows
us to obtain the following generalization of that theorem.

In what follows, we assume that $n$ is a fixed odd number and $n\geqslant
665$.

\begin{theorem}
\label{th3:v483}
If the factors ${G_i}, {i\in I}$ in the given $n$-periodic product
$F=\prod_{i\in I}^nG_i$ for odd
$n\geqslant 665$ do not contain involutions, then any nontrivial normal
subgroup $N$ of the group $F$ contains the subgroup $F^n$.
\end{theorem}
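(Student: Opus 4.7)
The plan is to adapt Adian's original proof of Theorem~\ref{th1:v483} so that the weaker inclusion $G_i^n\subset N$ replaces $G_i\subset N$, and to combine this with Theorem~\ref{th2:v483}. Because no $G_i$ contains an involution, the conditional-periodicity property guarantees $x^n=1$ in $F$ for every $x\in F$ not conjugate to any factor element. Consequently $F^n$ coincides with the normal closure of $\bigcup_i G_i^n$ in $F$, and the statement reduces to showing that $a^n\in N$ for every $a\in G_i$ and every $i\in I$.

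Suppose first that some intersection $N\cap G_j$ contains a nontrivial element $a_0$. For any $i\in I$ and any $a\in G_i$ I would reuse the rank-$1$ elementary period from the proof of Theorem~\ref{th2:v483}: pick $b_1\neq b_2$ in a suitable auxiliary factor and form
$$W \;=\; b_1^{-1} a_0 b_1 \,\, b_2^{-1} a_0 b_2 \,\, a.$$
Lemma~\ref{l1:v483} and the no-involutions hypothesis show that the cyclically reduced form of $W$ is not a product of two involutions and has length $\geq 2$, so $W$ is an elementary period of rank~$1$ and $W^n=1$ is a defining relation of $F$. Since $a_0\in N$, the congruence $W\equiv a\pmod{N}$ holds, hence $a^n\equiv W^n\equiv 1\pmod{N}$, i.e.\ $a^n\in N$. (The case $i=j$ is already covered by Theorem~\ref{th2:v483}, and the degenerate small-factor situations are handled exactly as in its proof.)

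It remains to place at least one nontrivial element of $N$ inside a factor. Fix a nontrivial $g\in N$. If $g$ is conjugate in $F$ to some $a_0\in G_j\setminus\{1\}$, normality of $N$ yields $a_0\in N\cap G_j$ and we are done. Otherwise conditional periodicity forces $g^n=1$, and $g$ is conjugate to a power of an elementary period of some rank $\alpha$. Mimicking the previous paragraph with $g$ in the role of $a_0$, the plan is to interlace suitable conjugates of $g$ with a chosen element $a\in G_i$ (taken in a factor with $G_i^n\neq\{1\}$) to build a word $V$ that (i)~satisfies the elementary-period conditions at rank $\alpha+1$, and (ii)~reduces modulo $N$ to a power of $a$ whose $n$-th power is nontrivial in $G_i$. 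The defining relation $V^n=1$ then pushes a nontrivial element of $G_i$ into $N\cap G_i$, reducing to the previous situation.

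The main obstacle lies in the construction of $V$ when $g$ is not conjugate to any factor element. Because such a $g$ has no short normal form over the factor letters, verifying that $V$ is cyclically reduced, has length $\geq 2$, is not a product of two involutions, and contains no $9$-th power of a shorter period of rank $\leq\alpha$ requires the full rank-by-rank inductive apparatus of the Novikov-Adian theory developed in~\cite{3:v483}. This is precisely the ``slight modification'' of the proof of Theorem~\ref{th1:v483} referred to just before the statement; the only change from the original argument in~\cite{1:v483} is that the induction is now designed to terminate at $G_i^n\subset N$ rather than at $G_i\subset N$.
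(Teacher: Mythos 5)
Your overall architecture matches the paper's: reduce to showing $a^n\in N$ for every factor element $a$, handle the case where $N$ meets some factor nontrivially via Theorem~\ref{th2:v483} together with rank-$1$ elementary periods, and finish with the dichotomy that every element of $F$ is conjugate either to a factor element or to a power of an elementary period (so that $g^n\in N$ in either case). Those portions are sound and essentially identical to the paper's argument.

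The gap is the case you yourself flag as ``the main obstacle'': when the nontrivial element $E\in N$ is conjugate to no factor element, you only describe a plan to ``interlace suitable conjugates of $g$ with a chosen $a\in G_i$'' and defer all verification to ``the full rank-by-rank inductive apparatus.'' That is precisely the part of the theorem that needs a real argument, and the paper's argument is both more specific and structurally different from your sketch. In the paper, $E$ is conjugate to $A^r$ for an elementary period $A$ of some rank $\gamma$ with $0<r\le (n+1)/2+46$, so $N$ contains all powers $A^{kr}$ and one can choose $t$ with $n/3\le t\le 2n/3$ and $A^t\in N$. The word considered is not an interlacing of conjugates but simply $D=[a,A^t]_0$, the normal form of $aA^t$. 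The point of forcing $t$ into the middle third of $n$ is that $D$ then contains exactly one active kernel of rank $\gamma$ with period $A$ (with between about $n/3-44$ and $2n/3$ segments); a chain of lemmas from \cite{3:v483} on normalized occurrences, related $p$-powers, and images of kernels under $f_\gamma$ then rules out the possibility $D=SyS^{-1}$ with $y$ in a factor and $S\in\mathcal{A}_{\gamma+3}$, because that representation would force $D$ to have two active kernels of rank $\gamma$. Hence $D$ is conjugate to a power of an elementary period, so $(aA^t)^n=D^n=1$ in $F$, and $A^t\in N$ gives $a^n\in N$ directly --- there is no need to first ``push a nontrivial element of $G_i$ into $N\cap G_i$'' and loop back to the earlier case. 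Without this construction (in particular, the choice of $t$ and the single-active-kernel analysis), your proposal restates the difficulty rather than resolving it.
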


\begin{proof}
Consider a nontrivial normal subgroup $N$ of $F$ and let $E$ be an
arbitrary nontrivial element in $N$.

According to Theorem 7 of the paper~\cite{1:v483}, either the word $E$ is
conjugate in $F$ to some element $a\in G_i$ for some $i\in I$ or $E$ is
conjugate to some word of the form $A^r$, where $A$ is an elementary period
of some rank $\gamma$, which depends on the word $E$.

Assume that the given word $E\in N$ is conjugate in $F$ to some element
$a\in G_k$ for some $k\in I$. Then $a\in N$. Let us to prove that
$G_j^n\subset N$ for every  $j\in I$.
Clearly, we have $a\in N_{G_k}=N\cap G_k$.
 Since $N_{G_k}$ is a nontrivial inheritably normal subgroup in $G_k$,
it follows that,
by Theorem~\ref{th2:v483}, we
have the inclusion $G_k^n\subset N$. By the definition
of the $n$-periodic product, for any nontrivial element $b\in G_j$, where
$j\not=k$, the element $ab$ is an elementary period of rank 1,  and hence
we have the defining relation $(ab)^n=1$~in $F$. From the relation $a\in N$
and the equality $(ab)^n=1$, it follows that $b^n\in N$, i.e., the inclusion
$G_j^n\subset N$ holds for all $j\neq k$ as well.

Thus, we have proved that if $N$ contains some element $a$ from some group
$G_k$, then for all $j\in I$ the inclusion $G_j^n\subset N$ holds.

It remains to consider the case when the  word  $E\in N$ is not
conjugate in~$F$ to any element~$a\in G_i$ for all $i\in I$. In this case,
by Lemma~4 from~\cite{1:v483}, the word~$E\in N$  conjugates to some power
$A^r$ of some elementary period  $A$ of some rank $\gamma$, where
$0<\gamma\leqslant \alpha$ and $0<r\leqslant(n+1)/2+46$. By the
same lemma, one can assume that the word $A^q$ occurs into some word
which belongs to the class $\mathcal{M}_{\gamma-1}$.
Then, by Lemma~\cite[II.6.13]{3:v483}, we obtain
$A^q\in \mathcal{M}_{\gamma-1}$.

We will prove that, in the remaining case, the relation
$G_i^n\subset N$ holds for any $i\in I$.
It follows from $E\in N$ that $A^{kr}\in N$ for any $k$.
Hence there exists a number $t$ such that
$A^{t}\in N$ and $(n/3\le t\le 2n/3)$.

Consider the word $aA^t$, where~$a$ is an arbitrary nontrivial element of
some group~$G_i, \, (i\in I)$. Let $D=[a,A^t]_0$ be the normal form of the
word $aA^t$.

From $A^q\in \mathcal{M}_{\gamma-1}$, by virtue of
Lemma~\cite[II.6.13]{3:v483},   we obtain
$A^t\in\mathcal{M}_{\gamma-1}$. Therefore
$D\in \mathcal{L}_{\gamma-1}$.

Since $A$ is an elementary  period of rank $\gamma$ and the inequality
$t\geqslant n/3$ holds, then the word $D$ contains only one active kernel
$V$ of rank $\gamma$ with period $A$, which by~\cite[IV.1.7]{3:v483}
contains not less than
$n/3-44$ and not more than $2n/3$ segments. At so,
by~\cite[I.4.34]{3:v483}, it follows from $D\in \mathcal{L}_{\gamma-1}$ that
$D\in \mathcal{A}_{\gamma+1}$.

By Lemma 4 from~\cite{1:v483}, one of the following two cases holds:

1) $D=SyS^{-1}$ in~$F$ for some $S\in A_{\gamma+3}$, where
$y$ is an element of one of the groups $G_i$;

2) the word $D$ is conjugate in $F$ to some power $C^l$, where  $C$ is an
elementary period of some rank.

We shall prove that Case~1) is impossible. Suppose that
\begin{equation}
\label{eq1:v483}
D = SyS^{-1} \,\,\text{in } F,
\end{equation}
where $S\in \mathcal{A}_{\gamma+3}$ and $y\in G_i$ for some $i\in I$.
 Since
$|y|=1$, we assume that $SyS^{-1}\in \mathcal{R}_0$.  By virtue of
Lemma~\cite[IV.2.20]{3:v483}, we can write $S\in \mathcal{M}_{\gamma+3}$.
Hence we have
\begin{equation}
\label{eq2:v483}
S\in (\mathcal{A}_{\gamma+3}\cap \mathcal{M}_{\gamma+3}).
\end{equation}

Suppose that $SyS^{-1}\not\in \mathcal{K}_\beta$ for some rank $\beta$. Let
$\beta$ be the minimal rank with this property.
By Lemma~\cite[IV.1.19]{3:v483},
there is a normalized occurrence
$W\in\mbox{Norm}(\beta, SyS^{-1}, n-217)$ in $SyS^{-1}$.
Let $W = P\ast H \ast Q$.
Since $S\in \mathcal{M}_{\gamma+1}$,
we can assume that $\beta\leqslant \gamma$.

According to Lemma~\cite[IV.1.18]{3:v483}, the subwords  $S$ and $S^{-1}$
cannot contain more than $(n+1)/2+42$ segments. Hence the
elementary word $H$ of rank $\beta$ is of the form $H=S_1yS_2$, where $y$ is
the central letter of the word $SyS^{-1}$, $S_1$ is a suffix of~$S$, and
$S_2$ is a prefix of~$S^{-1}$. Here each of the two subwords
$S_1$ and $S_2$ of the word $H$ contains not more than
$(n+1)/2+42$ segments and hence not less than
$$ %nmbr
\text{
$n-217-\biggl(\frac{n+1}{2}+43\biggr)=\frac{n-1}{2}-260\geqslant 2p$
}
$$ %endnmbr
segments.
 These occurrences of the two $p$-powers $S_1$ and $S_2$ are compatible,
because they have common continuation $W$.
Then, by virtue of Lemma~\cite[II.5.17]{3:v483}, the
subwords $S_1$ and $S_2$ must be related.
 Without loss of generality,
we can assume that $S_1$ is not longer than $S_2$. In that case,
$S_1^{-1}$ must coincide with some suffix of $S_2$. Hence we see
that the elementary $p$-powers  $S_1$ and $S_1^{-1}$ are related, but this
contradicts~\cite[II.5.22]{3:v483}. Hence our assumption $SyS^{-1}\not\in
\mathcal{K}_\beta$ was false. Thus, we have proved that
\begin{equation}
\label{eq3:v483}
SyS^{-1}\in \mathcal{K}_i \qquad\text{for any rank} \quad i.
\end{equation}
 This means that $SyS^{-1}$ is a reduced word in all ranks.

By virtue of Lemma~\cite[VI.2.8]{3:v483} and of relations
$D\in \mathcal{A}_{\gamma+1}$ and~\eqref{eq3:v483},
using equality~\eqref{eq1:v483}, we obtain
the following equivalence:
\begin{equation}
\label{eq4:v483}
D\overset{\gamma}{\sim}SyS^{-1}.
\end{equation}
By virtue of~\cite[IV.1.7]{3:v483} the single active kernel $V$ of rank
$\gamma$ with period $A$ in the word $D$ contains
$ > n/3-42$ and  $ < 2n/3$ periods. Then, by virtue
of~\cite[IV.2.2]{3:v483}, the image of $V$ in the word $SyS^{-1}$,
$V_1= f_\gamma(V; D,SyS^{-1})$, must also be a single active kernel of
rank $\gamma$, which by virtue of~\cite[IV.2.12]{3:v483}, must contain not
less than $n/3-86>q+4p$ segments.

But by~\cite[II.5.21]{3:v483}  and~\cite[II.5.2]{3:v483} that kernel must
be contained almost entirely in either $\ast S\ast yS^{-1}$ or in $Sy\ast
S^{-1}\ast$. In that case, by~\cite[III.2.24]{3:v483}
and~\cite[IV.1.5]{3:v483}, the word $SyS^{-1}$ itself must have one active
kernel of rank $\gamma$, whence it follows by~\cite[IV.2.17]{3:v483} that
$D$ has two active kernels of rank $\gamma$. This is a contradiction.
Hence Case 1) is impossible.

Consider Case~2), when the word~$D=aA^t$ conjugates to some power~$C^l$
of an elementary period~$C$ of some rank.

Suppose that $D=TC^lT^{-1}$.
Using the equations $aA^t=D$ and  $C^n=1$ in~$F$, we obtain
$$ %nmbr
\text{
$(aA^t)^n=D^n=TC^{ln}T^{-1}=1$.
}
$$ %endnmbr
Since~$A^t\in N$, we see that
$a^n\in N$ for an arbitrary nontrivial element
$a$ from any group ~$G_i$, $(i\in I)$.

Thus, we have proved that the inclusion ~$G_i^n\subset N$ holds for
any~$i\in I$.

Now it is sufficient to refer once more to Theorem~7 in~\cite{2:v483}.
According to that theorem, an arbitrary element~$g$ of the group~$F$ is
conjugate in~$F$ either to some element~$a\in G_i$ or to some power of some
elementary period of some rank.

If the element~$g$ conjugates to some element~$a\in G_i$, then the required
relation $g^n\in N$ follows from the
relation~$a^n\in N$ for any $a\in G_i$, which was proved above.

If the element~$g$ of the group~$F$ conjugates to some power of some
elementary period of some rank, then the relation $g^n=1$ holds in~$F$ by
definition and hence we have~$g^n\in N$.
  Theorem~\ref{th3:v483} is proved.
%HINTS: Delete the \qed if not needed.
%If the proof ends with a formula,
%move the \qed to the end of the formula.
%\renewcommand{\qed}{}
\end{proof}

\paragraph{{Not residually finite Hopfian groups satisfying a nontrivial identity.}}

 A group is said to be \textit{Hopfian} if every surjective endomorphism
of the group is an automorphism.
 According to the classical theorem of Maltsev, all
finitely generated residually finite groups are Hopfian. In particular,
absolutely free groups and free polynilpotent groups of finite rank are
Hopfian. Examples of relatively free solvable not residually finite Hopfian
groups have been constructed by Kleiman in~\cite{10:v483}.

 The following is still an open question:
Do the free periodic groups $\textbf{B}(m,n)$
for odd exponents $n\geqslant 665$ satisfy the Hopfian property?  Here we
give a positive answer to a more general question on the existence of a
finitely generated infinite  (not simple) Hopfian group satisfying an
identity relation of the form $x^n = 1$.

\begin{theorem}
\label{th4:v483}
If at least one of the factors of the given $n$-periodic product of
groups $G=\prod_{i\in I}^{n}G_i$ for odd  $n\geqslant 665$ does not satisfy
the identity relation $x^n=1$, then $G$ is Hopfian.
\end{theorem}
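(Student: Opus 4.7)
The plan is a short argument by contradiction. Suppose $\phi\colon G\to G$ is a surjective endomorphism with nontrivial kernel $N$. The key input is the principle that every nontrivial normal subgroup of $G$ contains $G^n$; this is Theorem~\ref{th3:v483} in the no-involution case, and more generally the unconditional statement announced in the abstract. Granting $G^n\subseteq N$, for every $g\in G$ we have $\phi(g)^n=\phi(g^n)=1$, and since $\phi$ is surjective, every element of $G$ is of the form $\phi(g)$, so the identity $x^n=1$ would hold throughout $G$. But by the hereditary property of the $n$-periodic product, each factor $G_i$ embeds faithfully in $G$, so the factor $G_{i_0}$ that by hypothesis violates $x^n=1$ produces an element of $G$ of the same kind, a contradiction. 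Hence $\ker\phi=1$ and $\phi$ is an automorphism.

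The main obstacle is justifying $G^n\subseteq N$ in full generality, because some factors of $G$ may contain involutions while Theorem~\ref{th3:v483} is stated only for involution-free families. The strategy I would follow is to reproduce the structure of the proof of Theorem~\ref{th3:v483} and fill in the involution case. Namely, take a nontrivial $E\in N$; by the Novikov--Adyan normal-form theorem, $E$ is conjugate in $G$ either to an element of some factor $G_k$ or to a power of an elementary period. In the first case, the inclusion $G_k^n\subseteq N$ follows from Theorem~\ref{th2:v483}, which already treats the delicate subcase $|G_2|=2$, and the relations $(ab)^n=1$ for $a\in G_k$ and nontrivial $b\in G_j$ ($j\ne k$) then force $G_j^n\subseteq N$ for every $j\in I$. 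In the second case one has to revisit the active-kernel analysis of the proof of Theorem~\ref{th3:v483}, checking that the auxiliary composites $aA^t$ remain elementary periods when involutions are present; Lemma~\ref{l1:v483} is the tool tailored precisely to this verification, since it pins down when a cyclically reduced word of length $\ge 2$ can accidentally split as a product of two involutions.

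With the unconditional ``every nontrivial normal subgroup contains $G^n$'' statement in hand, the two-step chain $N\supseteq G^n\;\Longrightarrow\;\phi(G)$ is of exponent $n\;\Longrightarrow\;G$ is of exponent $n$ contradicts the hypothesis on $G_{i_0}$, and Theorem~\ref{th4:v483} follows. I expect essentially no difficulty in the conjugate-to-a-factor-element branch (Theorem~\ref{th2:v483} is already set up to cover all subcases), and I expect the main technical effort to be a careful rereading of the length and segment estimates at the end of the proof of Theorem~\ref{th3:v483} to confirm that they are insensitive to the presence of involutions in the factors.
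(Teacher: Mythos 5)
Your core argument coincides with the paper's proof: the kernel of any noninjective endomorphism contains $G^n$ by Theorem~\ref{th3:v483}, hence the image satisfies $x^n=1$, and since some factor embeds in $G$ and violates that identity, the endomorphism cannot be surjective. Your concern about the involution-free hypothesis in Theorem~\ref{th3:v483} is a sharper observation than the paper itself makes---the published proof simply cites Theorem~\ref{th3:v483} even though Theorem~\ref{th4:v483} imposes no such restriction on the factors---so your sketch of how to extend the normal-subgroup result to factors containing involutions addresses a genuine gap that the paper leaves implicit.
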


\begin{proof} Suppose that the given $n$-periodic product of groups~ $G$
satisfies the assumptions of the theorem. By Theorem~\ref{th3:v483}, the
kernel $N$ of an arbitrary noninjective endomorphism $\alpha$ of $G$
contains the subgroup $G^n$. Then the image $\alpha(G)$ is isomorphic to
some quotient group $G/G^n$ of the group $G$. Hence  it satisfies the
identity $x^n=1$. Consequently,  the homomorphism $\alpha$  cannot be
surjective.

Theorem~\ref{th4:v483} allows one to construct nonsimple and
not residually finite Hopfian groups of bounded period.
%HINTS: Delete the \qed if not needed.
%If the proof ends with a formula,
%move the \qed to the end of the formula.
%\renewcommand{\qed}{}
\end{proof}

\begin{corollary}
If an odd number $n\ge 665$ is a proper divisor of $r$,
then the $n$-periodic
product of a finite number of cyclic groups of order $r$ is a Hopfian
not residually finite and nonsimple group.
\end{corollary}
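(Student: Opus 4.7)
The plan is to apply the three preceding theorems of the paper in sequence. Let $G=\mathbb{Z}_r\overset{n}{\ast}\cdots\overset{n}{\ast}\mathbb{Z}_r$ denote the $n$-periodic product of $m\ge 2$ copies of the cyclic group of order $r$. Since Theorems~\ref{th3:v483} and~\ref{th4:v483} require factors without involutions, we work under the (implicit) assumption that $r$ is odd. Writing $r=nk$ with $k\ge 2$, the generator of $\mathbb{Z}_r$ has order $r>n$, so the factor $\mathbb{Z}_r$ does not satisfy the identity $x^n=1$.

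The Hopfian property then follows immediately from Theorem~\ref{th4:v483}. For non-simplicity, I would apply Theorem~\ref{th1:v483}: the subgroup of $n$-th powers in $\mathbb{Z}_r$ is $n\mathbb{Z}_r$, a subgroup of index $n\ge 665$, so $\mathbb{Z}_r^n\ne\mathbb{Z}_r$ and the simplicity criterion of Theorem~\ref{th1:v483} fails.

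The main point is non-residual finiteness, which I would extract from Theorem~\ref{th3:v483}. Two observations are needed. First, $G$ is infinite --- for $m\ge 2$ nontrivial factors the $n$-periodic product is infinite by the Novikov-Adian theory --- so every finite-index normal subgroup of $G$ is infinite, hence nontrivial, hence by Theorem~\ref{th3:v483} contains $G^n$. Second, $G^n\ne\{1\}$: since each factor $\mathbb{Z}_r$ embeds into $G$ via the hereditary property of $n$-periodic products, a generator $a$ of $\mathbb{Z}_r$ gives $a^n\ne 1$ in $G$ (its order in the embedded factor is $k\ge 2$). Consequently the intersection of all finite-index normal subgroups of $G$ contains $G^n\ne\{1\}$, and $G$ fails to be residually finite. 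The main (mild) obstacle is exactly these two verifications; neither is deep, but both are essential for passing from Theorem~\ref{th3:v483} to the failure of residual finiteness, and they rely on basic properties of $n$-periodic products established earlier in the paper.
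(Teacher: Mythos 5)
Your proposal is correct and reaches all three conclusions, but it takes a genuinely different route from the paper at the main step. The paper proves non-residual finiteness by noting that the free Burnside group $\mathbf{B}(m,n)$ is a homomorphic image of $F$, that by Theorem~\ref{th3:v483} every homomorphism of $F$ with nontrivial kernel factors through $\mathbf{B}(m,n)$, and then importing the Adian--Zelmanov theorem that $\mathbf{B}(m,n)$ is not residually finite. You instead argue directly: every finite-index normal subgroup of the infinite group $G$ is nontrivial, hence contains $G^n$ by Theorem~\ref{th3:v483}, and $G^n\ne\{1\}$ because a generator $a$ of an embedded factor $\mathbb{Z}_r$ satisfies $a^n\ne 1$ (here it is essential that $n$ is a \emph{proper} divisor of $r$). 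This is more elementary and self-contained --- it bypasses the deep non-residual-finiteness result for $\mathbf{B}(m,n)$ entirely --- and it makes explicit the two small verifications ($G$ infinite, $G^n\ne 1$) that the paper leaves implicit; what the paper's route buys is the extra structural information that every proper quotient of $F$ is a quotient of $\mathbf{B}(m,n)$. Your use of the simplicity criterion of Theorem~\ref{th1:v483} for non-simplicity is likewise a legitimate alternative to the paper's observation that $F$ surjects onto the nontrivial proper quotient $\mathbf{B}(m,n)$. Finally, you are right to flag that Theorems~\ref{th1:v483}, \ref{th3:v483}, and~\ref{th4:v483} are stated for factors without involutions, so the argument as written needs $r$ odd; the paper's own proof silently carries the same restriction, since a cyclic group of even order contains an involution.
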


\begin{proof} Let $F$ be $n$-periodic product of $m>1$ cyclic groups of
order $r$, where $n$ divides $r$. Obviously, the free Burnside group
$\textbf{B}(m,n)$ is a homomorphic image of $F$ and the both of these
groups are not simple.
By Theorem~\ref{th3:v483}, any homomorphism with nontrivial kernel
of the group $F$ can be
``passed'' through $\textbf{B}(m,n)$.

From the well-known results of Adian~\cite{3:v483} and of
Zel'manov~\cite{11:v483}, it follows that the groups $\textbf{B}(m,n) $ are
not residually finite
for any odd $n \ge 665$. The Hopfian property of $F$ follows
from Theorem~\ref{th4:v483}.
%HINTS: Delete the \qed if not needed.
%If the proof ends with a formula,
%move the \qed to the end of the formula.
%\renewcommand{\qed}{}
\end{proof}

%=================‘ЇЁб®Є «ЁвҐа вгал====================
%PLEASE REMOVE THE PRECEDING \medskip OR ANY OTHER ``\...skip''
%(if there is one at the end of the text)

%%
%% PUT % BEFORE THE NEXT LINE IF THERE ARE NO ACKNOWLEDGMENTS !!

\paragraph{Acknowledgments.}

%% The author wishes to express deep gratitude to the referee
%%for valuable remarks.

This work was supported in part
by the Russian Foundation for Basic Research
and the State Committee for Science of the Republic of Armenia
(joint Armenian--Russian Research Project no.~13-RF-030).

\end{document}